\documentclass[11pt,reqno]{amsart}
\usepackage{cite}
\usepackage{amssymb,latexsym,amsmath,amsfonts}
\usepackage{latexsym}
\usepackage{amsmath, amsthm, amscd, amsfonts, amssymb,  color,comment}
\usepackage[mathscr]{eucal}
\usepackage{graphicx}
\graphicspath{ {./images/} }
\usepackage[mathscr]{eucal}
\usepackage[bookmarksnumbered,colorlinks,plainpages]{hyperref}
\newtheorem{theorem}{Theorem}[section]

\newtheorem{corollary}[theorem]{Corollary}
\theoremstyle{definition}
\newtheorem{definition}[theorem]{Definition}
\newtheorem{example}[theorem]{Example}
\theoremstyle{remark}

\numberwithin{equation}{section}
\def\DJ{\leavevmode\setbox0=\hbox{D}\kern0pt\rlap
 {\kern.04em\raise.188\ht0\hbox{-}}D}

\begin{document}

\title[Best Proximity Point Results for Perimetric Contractions]{Best Proximity Point Results for Perimetric Contractions}
\author[H. Garai, E. Petrov, P. Mondal, L.K. Dey]
{Hiranmoy Garai$^{1}$, Evgeniy Petrov$^{2,*}$, Pratikshan Mondal$^{3}$, Lakshmi Kanta Dey$^{4}$}

\address{{$^{1}$\,} Hiranmoy Garai,
                    Department of Mathematics,
                    Government General Degree College Ranibandh,
                    Bankura, India.}
                    \email{hiran.garai24@gmail.com}
\address{{$^{2}$\,} Evgeniy Petrov,
                    Institute of Applied Mathematics and Mechanics                      of the NAS of Ukraine,
                    Batiuka str. 19, 84116 Slovyansk,
                    Ukraine.}
                    \email{eugeniy.petrov@gmail.com}
\address{{$^{4}$}   Pratikshan Mondal,
                    Department of Mathematics,
                    A.B.N. Seal College, 
                    Cooch Behar, West Bengal, India}
                    \email{real.analysis77@gmail.com}                   
\address{{$^{3}$\,} Lakshmi Kanta Dey,
                    Department of Mathematics,
                    National Institute of Technology
                    Durgapur, India.}
                    \email{lakshmikdey@yahoo.co.in}
%

%
\thanks{$^*$ Corresponding author}
\subjclass[2020]{$47$H$10$, $54$H$25$.}
\keywords{Best proximity point; perimetric proximal  contractions; approximative compactness.}
\begin{abstract}
This paper has two aims, first one is  to introduce special kind of  proximal contractions guaranteeing a finite number of best proximity points, and second one is to  derive best proximity point results for perimetric contractions. To meet these two aims, we introduce  two new proximal contractions: perimetric proximal contractions of the first and the second kind, and derive best proximity point results for these mappings. We establish that for these particular mappings, best proximity points are not necessarily unique; however, we provide an upper bound, proving that at most two such points can exist.  To establish the validity of our results, we provide illustrative examples demonstrating that these newly defined mappings can possess unique or exactly two best proximity points.

\end{abstract}

\maketitle

\setcounter{page}{1}

\centerline{}

\centerline{}

\section{Introduction}
Fixed point theory is an invaluable tool to find solutions of equations of the form $Tx=x$ ($T$ being a self-map on a non empty set $A$, say). However, if $T$ isn't a self-map, specifically if  the domain and co-domain of $T$ are disjoint, fixed point theory isn't applicable for solving such equations. In fact, exact solutions to these equations are unavailable, so we aim for an optimal solution instead. If $A$ and $B$ are disjoint non-empty subsets of a metric space $(X,d)$, and $T:A\to B$ is a mapping, then finding an optimal solution to $Tx=x$ means finding $x^*\in A$ such that $d(x^*,Tx^*)=d(A,B)$, where $d(A,B)=\inf \{  d(a,b):a\in A, b\in B\}$. Such a point $x^* \in A$, if it exists, satisfying $d(x^*,Tx^*)=d(A,B)$, is referred to as a best proximity point of $T$. The concept of best proximity points was initially considered by Fan \cite{F} and Reich \cite{R16,R17}, and subsequently explored extensively by mathematicians in various directions throughout the last few years, see \cite{AV,KRR,B8,B11,K8,MD}. Among these various directions, most involve different types of contraction conditions, some of which have been used previously in fixed point theory. The various contraction conditions specifically developed for best proximity point results include many interesting contractions. Notably, the proximal contractions introduced by Sadiq Basha \cite{B10} are among the most significant ones. 

\begin{definition}(\hspace{-0.065em}\cite[\, p. 1774, Definition~2.2]{B10})\label{d01}
Let $A$ and $B$ be disjoint non-empty subsets of a metric space $(X,d)$. A mapping $T: A\to B$ is said to  be a \emph{proximal contraction of the first kind} if there exists a constant $\alpha\in [0,1)$ such that for all $u_1,u_2,x_1,x_2\in A$, the following hold:
$$
\left.\begin{aligned}
d(u_1,Tx_1)= d(A,B)\\
d(u_2,Tx_2)= d(A,B) 
\end{aligned}\right\rbrace \implies \begin{aligned}d(u_1,u_2)\leq  \alpha d(x_1,x_2). \end{aligned}$$
\end{definition}

\begin{definition}(\hspace{-0.05em}\cite[\, p. 1774,   Definition~2.4]{B10})
Let $A$ and $B$ be disjoint non-empty subsets of a metric space $(X,d)$. A mapping $T: A\to B$ is said to  be a \emph{proximal contraction of the second kind} if there exists a constant $\alpha\in [0,1)$ such that for all $u_1,u_2,x_1,x_2\in A$, the following hold:
$$
\left.\begin{aligned}
d(u_1,Tx_1)= d(A,B)\\
d(u_2,Tx_2)= d(A,B) 
\end{aligned}\right\rbrace \implies \begin{aligned}d(Tu_1,Tu_2)\leq  \alpha d(Tx_1,Tx_2). \end{aligned}$$
\end{definition}

The key best proximity point findings concerning these proximal contractions as follows, with definitions of $A_0$ and $B_0$ recalled beforehand: 

For a metric space $(X,d)$ and two non-empty subsets $A,B$ of $X$, we use the notations $A_0$ and $B_0$ as follows:
\begin{align*}
&A_0=\{x\in A:d(x,y)=d(A,B)~~\mbox{for some}~~y\in B\}\\
&B_0=\{y\in B:d(x,y)=d(A,B)~~\mbox{for some}~~x\in A\}.
\end{align*}

\begin{theorem}\text{(\hspace{-0.05em}\cite[\, p. 1778, Corollary~3.4]{B10})}
If $(X,d)$ is complete; $A,B$ are closed, $A_0, B_0$ are non-empty; $T(A_0)\subseteq B_0$ and $T$ is a continuous proximal contraction of the first kind, then $T$ has a unique best proximity point in $A$.
\end{theorem}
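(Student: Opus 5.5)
The plan is the standard Picard-type iteration adapted to the proximal setting, with the contraction of the first kind forcing the iterates to be Cauchy.

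\textbf{Construction of the sequence.} Since $A_0\neq\emptyset$, pick $x_0\in A_0$. Then $Tx_0\in T(A_0)\subseteq B_0$, so there is $x_1\in A$ with $d(x_1,Tx_0)=d(A,B)$. Such an $x_1$ lies in $A_0$, because it is at distance $d(A,B)$ from a point of $B$. Repeating, we obtain a sequence $(x_n)\subseteq A_0$ with
$$d(x_{n+1},Tx_n)=d(A,B)\qquad\text{for all } n\geq 0.$$

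\textbf{Cauchy estimate.} Apply the definition of proximal contraction of the first kind to the quadruple $u_1=x_{n+1}$, $x_1=x_n$, $u_2=x_n$, $x_2=x_{n-1}$. This gives
$$d(x_{n+1},x_n)\leq \alpha\, d(x_n,x_{n-1}),$$
and hence $d(x_{n+1},x_n)\leq \alpha^n d(x_1,x_0)$. The usual geometric-series argument shows $(x_n)$ is Cauchy. By completeness of $X$ it converges to some $x^*$, and $x^*\in A$ because $A$ is closed.

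\textbf{Passage to the limit.} By continuity of $T$, $Tx_n\to Tx^*$. The distance function is continuous, so
$$d(x^*,Tx^*)=\lim_n d(x_{n+1},Tx_n)=d(A,B).$$
Thus $x^*$ is a best proximity point. Closedness of $B$ is not really needed here, since $Tx^*\in B$ automatically. This limit step is the only place continuity is used, and it is the main point to check. In Basha's general theorem continuity is replaced by approximative compactness, but under the present hypotheses the limit argument is direct.

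\textbf{Uniqueness.} Suppose $x^*$ and $y^*$ are both best proximity points, so $d(x^*,Tx^*)=d(A,B)$ and $d(y^*,Ty^*)=d(A,B)$. Applying the contraction with $u_1=x_1=x^*$ and $u_2=x_2=y^*$ gives
$$d(x^*,y^*)\leq \alpha\, d(x^*,y^*).$$
Since $\alpha<1$, this forces $x^*=y^*$.

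The only subtle point overall is keeping the iterates inside $A_0$, so that $T(A_0)\subseteq B_0$ can be reapplied at each step. This is guaranteed by the definition of $A_0$, as noted in the construction.
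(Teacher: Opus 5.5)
Your proof is correct and follows the standard argument. The paper only cites this result from Basha without proof, but its proof of Theorem~\ref{t1} is the perimetric analogue of your scheme: iterate inside $A_0$ using $T(A_0)\subseteq B_0$, obtain geometric decay from the contraction, pass to the limit by continuity of $T$, and get uniqueness by inserting two best proximity points into the contraction inequality. In your two-point setting no analogue of Condition~$\Lambda$ or of the distinctness bookkeeping for consecutive iterates is needed, which is why you obtain uniqueness rather than ``at most two''.
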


\begin{theorem}\text{(\hspace{-0.065em}\cite[\, p. 1777, Corollary~3.2]{B10})}
If $(X,d)$ is complete; $A,B$ are closed, $A_0, B_0$ are non-empty; $A$ is approximatively compact with respect to $B$; $T(A_0)\subseteq B_0$ and $T$ is a continuous proximal contraction of the second kind, then $T$ has a unique best proximity point in $A$.
\end{theorem}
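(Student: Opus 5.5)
The plan is to build a sequence in $A_0$ whose images under $T$ are Cauchy, and then use approximative compactness to extract a best proximity point. Throughout write $D=d(A,B)$.

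\textbf{Step 1: the sequence.} Pick $x_0\in A_0$. Since $T(A_0)\subseteq B_0$, we have $Tx_0\in B_0$, so there is $x_1\in A$ with $d(x_1,Tx_0)=D$; by definition $x_1\in A_0$. Inductively this yields $x_n\in A_0$ with
$$d(x_{n+1},Tx_n)=D \quad\text{for all } n .$$

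\textbf{Step 2: the images are Cauchy.} Apply the second-kind condition with $u_1=x_{n+1}$, $x_1=x_n$, $u_2=x_n$, $x_2=x_{n-1}$. This gives
$$d(Tx_{n+1},Tx_n)\le \alpha\, d(Tx_n,Tx_{n-1}).$$
The usual geometric-series argument then shows that $(Tx_n)$ is Cauchy in $X$. Since $X$ is complete and $B$ is closed, $Tx_n\to y\in B$.

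\textbf{Step 3: passing to the limit.} Observe that
$$d(x_{n+1},y)\le d(x_{n+1},Tx_n)+d(Tx_n,y)=D+d(Tx_n,y)\to D .$$
Also $d(x_{n+1},y)\ge d(y,A)\ge D$, so $d(x_{n+1},y)\to d(y,A)$. This is where approximative compactness of $A$ with respect to $B$ is used: it gives a subsequence $x_{n_k}\to x^*\in A$, using that $A$ is closed. Continuity of $T$ gives $Tx_{n_k}\to Tx^*$, and hence $Tx^*=y$. Then
$$d(x^*,Tx^*)=\lim_k d(x_{n_k+1},Tx_{n_k})=D .$$
Here one passes to the shifted subsequence $x_{n_k+1}$; it also converges, to some $x'\in A$, after extracting once more by approximative compactness.

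\textbf{The main obstacle.} The real difficulty is this index shift in Step 3. The sequence $(x_n)$ itself need not converge, because the second-kind condition controls only the images. I therefore take two limits. Let $x'$ be a limit of a subsequence of $(x_{n_k+1})$ and $x^*$ a limit of a subsequence of $(x_{n_k})$. By continuity of $T$ at both points, $Tx'=y=Tx^*$. Then
$$d(x',Tx^*)=\lim d(x_{n_k+1},Tx_{n_k})=D,$$
so
$$d(x',Tx')=d(x',y)=D .$$
Thus $x'$ is a best proximity point.

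\textbf{Uniqueness.} Suppose $p,q$ are best proximity points. Apply the condition with $u_1=x_1=p$ and $u_2=x_2=q$. This gives $d(Tp,Tq)\le\alpha\,d(Tp,Tq)$, hence $Tp=Tq$. Uniqueness of $p$ itself does not follow from the second-kind condition alone. I would therefore only claim uniqueness of the image $Tp$, or uniqueness under the additional injectivity implicit in the cited result, and I expect to have to check exactly how \cite{B10} phrases this.
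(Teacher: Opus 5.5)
The paper gives no proof of this quoted result, but its proof of Theorem~\ref{t2} follows exactly your route: iterate $d(u_{n+1},Tu_n)=d(A,B)$, show $(Tu_n)$ is Cauchy with limit $v\in B$, get $d(v,u_{n+1})\to d(v,A)$, extract a convergent subsequence by approximative compactness, and finish with continuity of $T$. So your existence argument is correct and essentially the same.

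The one blemish is the displayed equality $d(x^*,Tx^*)=\lim_k d(x_{n_k+1},Tx_{n_k})$ in Step~3, which is unjustified as written. The ``main obstacle'' you then work around is not real. You have already shown $d(x_n,y)\to d(A,B)$ along the whole sequence. So a single extraction $x_{n_k}\to x^*$ gives $d(x^*,y)=\lim_k d(x_{n_k},Tx_{n_k-1})=d(A,B)$, and $Tx^*=y$ by continuity. This is precisely the index pairing used in \eqref{e4}. Your double extraction is valid, just unnecessary.

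Your hesitation about uniqueness is justified: the uniqueness claim, as stated here, is false. In $\mathbb{R}$ take $A=\{-1,1\}$, $B=\{0\}$ and $T\equiv 0$. Then:
\begin{itemize}
\item $d(A,B)=1$, $A_0=A$ and $B_0=B$;
\item $A$ is finite, hence approximatively compact with respect to $B$;
\item $T$ is continuous, and the second-kind inequality reads $0\le 0$.
\end{itemize}
Yet both $-1$ and $1$ are best proximity points.

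The second-kind condition yields only what you derived, $Tp=Tq$. Uniqueness of the point itself needs an extra hypothesis such as injectivity of $T$, since then $Tp=Tq$ forces $p=q$. This matches the paper's own treatment of the perimetric analogue: Theorem~\ref{t2} asserts the bound on the number of best proximity points only when $T$ is injective.
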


Following Basha's pioneering work, proximal contractions have been explored in various ways (see \cite{B8, CXWD, MGD}). It's noteworthy that most best proximity point results conclude with the existence of either a unique or infinitely many best proximity points.

On the other hand, a recent notable development in fixed point theory is the concept of mappings that contract perimeters of triangles, introduced by Petrov \cite{P9} in  $2023$.

\begin{definition}(\hspace{-0.065em}\cite[\, p. 2, Definition~2.1]{P9})
Let $(X,d)$ be a metric space with at least three points. Then  $T: X \to X$ is said to be a mapping contracting perimeters of triangles on $X$ if there exists $\alpha \in [0,1)$ such that the inequality
    \begin{equation}\label{mcpt1}
        d(Tx,Ty)+ d(Ty,Tz)+ d(Tz,Tx) \le \alpha (d(x,y)+ d(y,z)+ d(z,x))
        \end{equation}
        holds for all three pairwise distinct points $x, y, z \in X$.
\end{definition}

This pioneering approach is complemented by a foundational fixed point existence result, which goes as follows:
\begin{theorem}\text{(\hspace{-0.065em}\cite[\, p. 3, Theorem~2.4]{P9})}\label{T1}
Let $(X,d)$, $|X| \geq 3$, be a complete metric space and let $T:X\to X$ be a mapping contracting perimeters of triangle on $X$. Then $T$ has a fixed point in $X$ if and only if $T$ does not possess any periodic points of prime period $2$. The number of fixed points is at most two.
\end{theorem}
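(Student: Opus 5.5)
Three things need proving: (a) a fixed point forces the absence of periodic points of prime period $2$, (b) the absence of such points gives a fixed point, and (c) there are at most two fixed points. The construction is a Picard iteration $x_{n+1}=Tx_n$, controlled through the perimeters $p_n=d(x_n,x_{n+1})+d(x_{n+1},x_{n+2})+d(x_{n+2},x_n)$.

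\emph{Necessity.} Suppose $x^*$ is a fixed point and $Tu=v$, $Tv=u$ with $u\neq v$. Then $x^*\notin\{u,v\}$, since otherwise $u=Tu=v$. So $x^*,u,v$ are pairwise distinct. Applying \eqref{mcpt1} to them maps the triangle to itself, giving $P\le\alpha P$ with $P>0$, which is a contradiction.

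\emph{Sufficiency.} Assume there are no periodic points of prime period $2$. Take any $x_0$ and set $x_{n+1}=Tx_n$. If some $x_n$ is a fixed point we are done. Otherwise $x_n\ne x_{n+1}$ for all $n$, and $x_{n+2}\ne x_n$, because otherwise $x_n$ would have prime period $2$. Hence every triple $x_n,x_{n+1},x_{n+2}$ is pairwise distinct, and \eqref{mcpt1} gives $p_{n+1}\le\alpha p_n$, so $p_n\le\alpha^np_0$. Since $d(x_n,x_{n+1})\le p_n$, the triangle inequality makes $(x_n)$ Cauchy. By completeness $x_n\to x$.

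The main obstacle is showing $Tx=x$ without assuming continuity of $T$. Any mapping contracting perimeters turns out to be continuous at non-isolated points, but it is cleaner to argue directly. Suppose $Tx\neq x$. Since $x_n\ne x_{n+1}$ for all $n$, the sequence is not eventually constant, so infinitely many $x_n$ differ from $x$; the set of $n$ with $x_n= Tx$ is also finite. Indeed, if infinitely many $x_n=Tx$, then $x=Tx$ in the limit, and by injectivity-type reasoning the sequence cannot repeat a value infinitely often, as that would make it eventually periodic with a nonzero perimeter, contradicting $p_n\to0$. So for infinitely many $n$ the points $x,x_n,x_{n+1}$ are pairwise distinct. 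Applying \eqref{mcpt1} to them gives
\[
d(Tx,x_{n+1})+d(x_{n+1},x_{n+2})+d(x_{n+2},Tx)\le\alpha\bigl(d(x,x_n)+d(x_n,x_{n+1})+d(x_{n+1},x)\bigr).
\]
Letting $n\to\infty$ yields $2d(Tx,x)\le 0$, a contradiction. The care needed lies in these distinctness bookkeeping cases, for example when the whole sequence avoids $x$ except finitely often.

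\emph{At most two fixed points.} If $x,y,z$ were three distinct fixed points, \eqref{mcpt1} would give $P\le\alpha P$ with $P>0$, which is impossible.
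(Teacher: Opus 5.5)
Your argument is correct and is essentially the route by which the paper recovers this theorem in Corollary~\ref{cor1} from Theorem~\ref{t1}: Picard iteration, pairwise distinct consecutive triples from the absence of period-$2$ points, geometric decay of the perimeters, and the $P\le\alpha P$ contradiction for three fixed points. There are two differences. First, you show $Tx=x$ directly by applying \eqref{mcpt1} to $x,x_n,x_{n+1}$, whereas the paper cites the continuity of perimeter-contracting maps from Petrov's paper; your distinctness bookkeeping there is muddled, and is cleanest via the paper's observation that the strictly decreasing positive $p_n$ force all $x_n$ to be distinct, so $x_n\neq x$ for all large $n$. Second, you also prove necessity, which the paper only quotes.
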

This area has been further explored by  mathematicians in various ways (see \cite{B12, PSB, PB23,BPS25, BMD25, BMD26}), though each effort is limited to fixed points only. A striking aspect of these new mappings is that they often yield a finite number of fixed points, unlike most existing results which focus on unique or infinite fixed points. 

These facts raise two questions. First, why are perimeter-contracting mappings restricted to fixed points, and not extended to best proximity points? The second question is: can we have best proximity point results with a finite number of best proximity points? With these questions in mind, we explore the perimetric counterparts of Sadiq Basha's proximal contractions, seeking answers to both. More precisely, we propose two kinds of perimetric proximal contractions:  perimetric proximal contraction of the first kind and  perimetric proximal contraction of  the second kind. After this, we obtain two results guaranteeing a finite number of best proximity points, which aren't necessarily unique- in fact, we show the number of best proximity points is at most two.  These results are accompanied by examples to illustrate their validity.

\section{Perimetric proximal contractions}\label{sec2}

Here, we  define perimetric proximal contractions of the first and  the second kind formally.
\begin{definition}\label{d1}
Let $A,B$ be two non-empty subsets of a metric space $(X,d)$, and let $T:A\to B$ be a mapping. Then $T$ is called a \emph{perimetric proximal contraction of  the first kind} if there exists $\alpha\in [0,1)$ such that for all $u_1,u_2,u_3,x_1,x_2,x_3\in A$ with  $x_1,x_2,x_3$ are pairwise distinct, the following hold:
$$
\left.\begin{aligned}
d(u_1,Tx_1)= d(A,B)\\
d(u_2,Tx_2)= d(A,B) \\
d(u_3,Tx_3)= d(A,B)
\end{aligned}\right\rbrace \implies \begin{aligned}d(u_1,u_2)+d(u_2,u_3)+d(u_3,u_1)\leq \\ \alpha\big\{ d(x_1,x_2)+d(x_2,x_3)+d(x_3,x_1)  \big\} \end{aligned}$$
\end{definition}

\begin{definition}\label{d2}
Let $A,B$ be two non-empty subsets of a metric space $(X,d)$, and let $T:A\to B$ be a mapping. Then $T$ is called a \emph{perimetric proximal contraction of  the second kind} if there exists $\alpha\in [0,1)$ such that for all $u_1,u_2,u_3,x_1,x_2,x_3\in A$ with  $Tx_1,Tx_2,Tx_3$ are pairwise distinct, the following hold:
$$
\left.\begin{aligned}
d(u_1,Tx_1)= d(A,B)\\
d(u_2,Tx_2)= d(A,B) \\
d(u_3,Tx_3)= d(A,B)
\end{aligned}\right\rbrace \implies \begin{aligned}d(Tu_1,Tu_2)+d(Tu_2,Tu_3)+d(Tu_3,Tu_1)\leq \\ \alpha\big\{ d(Tx_1,Tx_2)+d(Tx_2,Tx_3)+d(Tx_3,Tx_1)  \big\} \end{aligned}$$
\end{definition}

Let $A$ and $B$ be disjoint non-empty subsets of a metric space $(X, d)$, $|A|\geqslant 3$, and let $T\colon  A \to B$ be a mapping. It is easy to see that if for any $u_1, u_2, u_3, x_1, x_2, x_3 \in  A$ with $x_1, x_2, x_3$ are pairwise distinct for which the following hold:
$d(u_1, Tx_1) = d(A,B)$,
$d(u_2, Tx_2) = d(A,B)$,
$d(u_3, Tx_3) = d(A,B)$, then any proximal contraction of the first kind is a perimetric proximal contraction of the first
kind. Indeed, by Definition~\ref{d01} the following inequalities hold:
$$
d(u_1,u_2)\leqslant \alpha d(x_1,x_2), \quad
d(u_2,u_3)\leqslant \alpha d(x_2,x_3), \quad
d(u_3,u_1)\leqslant \alpha d(x_3,x_1).
$$
Summarizing the left and the right parts of these inequalities we get exactly the respective inequality in Definition~\ref{d1}.

In the following example we show that a perimetric proximal contraction of the first kind is not obligatory proximal contraction of the first kind. 

\begin{example}
 Consider a metric space $(\mathbb R,d)$, where $d$ is the Euclidean distance on $\mathbb R$.
Let $A=\{-3,0,3,4\}$ and $B=[-2,-1]\cup [1,2]$. Let also 
$$
T(-3) = -2, \quad T(0) = \frac{3}{2}, \quad T(3) = 2, \quad T(4) = 1.
$$

Let us show that $T$ is a perimetric proximal contraction of the first kind. It is easy to see that $d(A, B) = 1$. Next, we identify all pairs $(u, x) \in A \times A$ that satisfy the condition $d(u, Tx) = d(A, B) = 1$.

Let us test each $x \in A$ to find corresponding $u \in A$:
\begin{itemize}
    \item If $x = -3$, $Tx = -2$. We need $d(u, -2) = 1$. The only solution in $A$ is $u = -3$, which gives the pair $(-3, -3)$.
    \item If $x = 0$, $Tx = 1.5$. We need $d(u, 1.5) = 1 \implies u \in \{0.5, 2.5\}$. None are in $A$.
    \item If $x = 3$, $Tx = 2$. We need $d(u, 2) = 1$. The only solution in $A$ is $u = 3$, which gives the pair $(3, 3)$.
    \item If $x = 4$, $Tx = 1$. We need $d(u, 1) = 1$. The only solution in $A$ is $u = 0$, which gives the pair $(0, 4)$.
\end{itemize}
The only pairs $(u, x)$ satisfying the proximal condition are $\{(-3, -3), (3, 3), (0, 4)\}$.

The definition requires the inequality to hold for $x_1, x_2, x_3$ that are pairwise distinct. Since there are exactly three valid $x$ values in our pairs $\{-3, 3, 4\}$, there is only one possible set of points $\{u_1, u_2, u_3\}$ (up to permutation) that satisfies the necessary condition. Specifically:
$$
\{u_1, u_2, u_3\} = \{-3, 3, 0\} \quad \text{and} \quad \{x_1, x_2, x_3\} = \{-3, 3, 4\}.
$$

Further,
$$
\begin{aligned}
P(u_1,u_2,u_3) &= d(u_1, u_2) + d(u_2, u_3) + d(u_3, u_1) \\
&= d(-3, 3) + d(3, 0) + d(0, -3) \\
&= 6 + 3 + 3 = 12,
\end{aligned}
$$
and
$$
\begin{aligned}
P(x_1,x_2,x_3) &= d(x_1, x_2) + d(x_2, x_3) + d(x_3, x_1) \\
&= d(-3, 3) + d(3, 4) + d(4, -3) \\
&= 6 + 1 + 7 = 14.
\end{aligned}
$$
Clearly,  the inequality in Definition 2.1 holds for all  $\alpha \in [\frac{6}{7}, 1)$. Thus, $T$ is a perimetric proximal contraction of the first kind. Also, it is easy to see that $A_0=\{-3,0,3\}$, $B_0=\{-2,-1,1,2\}$, $T(A_0)=\{-2,1.5,2\}$ and $T(A_0) \nsubseteq B_0$.
We remark that $T$ has two best proximity points $-3$ and $3$.

Since $d(-3,T(-3))=1$ and $d(3,T(3))=1$  but $d(-3,3)=d(-3,3)$, by Definition~\ref{d01} we have that $T$ is not a proximal
contraction of the first kind.
\end{example}

Moving on, we present our main results on best proximity points. First, we formulate a special condition for a mapping $T$, which will be crucial for our findings. We designate this special condition as Condition~$\Lambda$.

\textbf{Condition~$\Lambda$:} Let $A,B$ be two non-empty subsets of a metric space $(X,d)$ and let $T:A\to B$ be a mapping. Then, $T$ is said to satisfy Condition $\Lambda$ if there does not exist any pair of points $(x,y)$  where $x\neq y$, of $A$ such that $d(x,Ty)=d(y,Tx)=d(A,B)$.

We now present our first result.

\begin{theorem}\label{t1}
Suppose that $(X,d)$ is a complete metric space and $A,B$ are closed subsets of $X$. Let $|A_0| \geq 3$. Let $T:A\to B$ be continuous perimetric proximal contraction of  the first kind such that $T(A_0)\subseteq B_0$. Then if $T$ satisfies  Condition $\Lambda$,  $T$ possesses a best proximity point. Moreover, the best proximity points of $T$ are at most two in number.
\end{theorem}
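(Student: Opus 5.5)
We follow the iterative scheme of Sadiq Basha combined with Petrov's argument for mappings contracting perimeters of triangles. The plan is to build a proximal Picard sequence, show it is Cauchy by contracting perimeters, pass to the limit, and finally count best proximity points by applying the contraction to three of them.

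\emph{Construction of the sequence.} Pick $x_0\in A_0$. Since $T(A_0)\subseteq B_0$, we have $Tx_0\in B_0$, so there is $x_1\in A$ with $d(x_1,Tx_0)=d(A,B)$. Then $x_1\in A_0$ by definition, so we can iterate and obtain $(x_n)\subseteq A_0$ with $d(x_{n+1},Tx_n)=d(A,B)$ for all $n$. Two degenerate cases end the argument at once.
\begin{itemize}
\item If $x_{n+1}=x_n$ for some $n$, then $x_n$ is a best proximity point.
\item If $x_{n+2}=x_n\neq x_{n+1}$ for some $n$, then $d(x_n,Tx_{n+1})=d(x_{n+1},Tx_n)=d(A,B)$, which Condition $\Lambda$ forbids.
\end{itemize}
Since $|A_0|\ge 3$, we can choose $x_0$ suitably (as Petrov does) so that the triples we use are genuine. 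Thus we may assume that $x_n,x_{n+1},x_{n+2}$ are pairwise distinct for every $n$.

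\emph{Cauchy property.} Apply Definition~\ref{d1} with $(u_1,u_2,u_3)=(x_{n+1},x_{n+2},x_{n+3})$ and $(x_1,x_2,x_3)=(x_n,x_{n+1},x_{n+2})$. Writing $p_n=d(x_n,x_{n+1})+d(x_{n+1},x_{n+2})+d(x_{n+2},x_n)$, this gives $p_{n+1}\le\alpha p_n$, hence $p_n\le\alpha^n p_0$. In particular $d(x_n,x_{n+1})\le\alpha^np_0$, so the series $\sum_n d(x_n,x_{n+1})$ converges geometrically and $(x_n)$ is Cauchy. The key step, where one must mirror Petrov's proof, is making sure that every triple $x_n,x_{n+1},x_{n+2}$ is pairwise distinct. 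Consecutive equality and two-step return are already handled above. If such a coincidence does arise, we stop at that index and are in one of the two cases already treated.

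\emph{Limit and best proximity point.} By completeness and closedness of $A$, $x_n\to x^*\in A$. By continuity of $T$, $Tx_n\to Tx^*$. Then
\[
d(x^*,Tx^*)=\lim_{n\to\infty} d(x_{n+1},Tx_n)=d(A,B),
\]
so $x^*$ is a best proximity point.

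\emph{At most two.} Suppose $y_1,y_2,y_3$ are three pairwise distinct best proximity points. Take $u_i=x_i=y_i$ in Definition~\ref{d1}. This gives $P\le\alpha P$ with $P=d(y_1,y_2)+d(y_2,y_3)+d(y_3,y_1)>0$, which is impossible since $\alpha<1$. Hence $T$ has at most two best proximity points.
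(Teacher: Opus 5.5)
Your proposal is correct and follows the paper's proof essentially step for step: the same proximal Picard sequence in $A_0$, the same use of Condition $\Lambda$ to rule out $x_{n+2}=x_n$, the same perimeter recursion $p_{n+1}\le\alpha p_n$ giving the Cauchy property, and the same three-point contradiction for the upper bound. Your remark about choosing $x_0$ ``suitably (as Petrov does)'' is unnecessary, because your two bullet cases already force every consecutive triple to be pairwise distinct for any starting point; you also rightly skip the paper's extra argument that all terms of the sequence are distinct, which the Cauchy estimate does not need.
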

\begin{proof}
 Since $|A_0| \geq 3$, we can choose an element $u_0\in A_0$. Then, $Tu_0\in T(A_0)\subseteq B_0$, so there exists $u_1\in A_0$ such that $d(u_1,Tu_0)=d(A,B)$. Again, $Tu_1\in T(A_0)\subseteq B_0$, so there exists $u_2\in A_0$ such that $d(u_2,Tu_1)=d(A,B)$. Continuing in this way, we obtain a sequence $\{u_n\}$ having the properties that $u_n\in A_0$ and $d(u_{n+1},Tu_n)=d(A,B)$ for all $n\in \mathbb{N}$.

If $u_n = u_{n+1}$ for some $n \in \mathbb{N}$, it follows that $u_n$ is a best proximity point of $T$, as $d(u_{n+1}, Tu_n) = d(u_n, Tu_n) = d(A,B)$.

Next, we assume that $u_n \neq u_{n+1}$ for all $n \in \mathbb{N}$. Under this assumption, we claim that $u_n,u_{n+1},u_{n+2}$ are pairwise distinct for all $n \in \mathbb{N}$. If not, then we must have $u_n=u_{n+2}$ for some $n \in \mathbb{N}$. Then $d(u_n,Tu_{n+1})=d(u_{n+2}, Tu_{n+1})=d(A,B)$ and also $(u_{n+1}, Tu_{n})=d(A,B)$. Hence, $T$ does not satisfy Condition $\Lambda$,  contradicting the assumption made earlier. Thus, any three consecutive terms of $\{u_n\}$ are pairwise distinct. 

Now, since $d(u_{n+1},Tu_{n})=d(A,B)=d(u_{n+2},Tu_{n+1})=d(u_{n+3},Tu_{n+2})$, we have 
\begin{align}
d(u_{n+1},u_{n+2})+d(u_{n+2},u_{n+3})+&d(u_{n+3},u_{n+1})\leq \nonumber\\
& \alpha\{d(u_{n},u_{n+1})+d(u_{n+1},u_{n+2})+d(u_{n+2},u_{n})\}. \label{e1}
\end{align}
Let $t_n=d(u_{n},u_{n+1})+d(u_{n+1},u_{n+2})+d(u_{n+2},u_{n})$. Then from \eqref{e1}, we get 
\begin{equation}\label{e2}
t_{n+1}\leq \alpha t_n~~~\mbox{for all}~~~n\in \mathbb{N}.
\end{equation}
Further, from \eqref{e2}, we can write
\begin{equation}\label{e3}
t_1>t_2>\cdots>t_n>\cdots.
\end{equation}
If all terms of $\{u_n\}$ are not distinct, then there is a minimal natural number $j(\geq 3)$ such that $u_j=u_i$ for some $i\in \mathbb{N}$ with $0\leq i\leq j-2$. This leads to $u_{j+1}=u_{i+1}$, $u_{j+2}=u_{i+2},\cdots$, and so, $t_j=t_i$, contradicting \eqref{e3}. Hence, all terms of $\{u_n\}$ are distinct. From \eqref{e2}, it follows that $$t_{n+1} \leq \alpha t_n \leq \alpha^2 t_{n-1} \leq \cdots \leq \alpha^n t_1,$$ which yields $d(u_{n+1}, u_{n+2}) \leq t_{n+1} \leq \alpha^n t_1$.

With $0 \leq \alpha < 1$, $\{u_n\}$ is a Cauchy sequence in $A$, a closed subset of the complete metric space $(X,d)$, so there exists $u \in A$ such that $u_n \to u$ as $n\to \infty$. Given that $T$ is continuous, we have $Tu_n \to Tu$, and consequently, $d(u_{n+1}, Tu_n) \to d(u,Tu)$ as $n \to \infty$. Thus, we have $d(u,Tu)=d(A,B)$, implying that $u$ is a best proximity point of $T$. 

For the last part of the proof, we assume that $T$ has three distinct (which are pairwise distinct as well) best proximity points $x,y,z\in A$. Then, we have $d(x,Tx)=d(A,B)=d(y,Ty)=d(z,Tz)$ and so, 
\begin{align*}
d(x,y)+d(y,z)+d(z,x)&\leq \alpha\{d(x,y)+d(y,z)+d(z,x)\}\\
&< d(x,y)+d(y,z)+d(z,x),
\end{align*}
leading to a contradiction, and thus the proof is concluded.
\end{proof}
From Theorem \ref{t1}, we have the following corollary:
\begin{corollary}\label{cor1}
The sufficiency holds in Theorem \ref{T1}.
\end{corollary}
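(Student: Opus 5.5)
We obtain Corollary~\ref{cor1} by specializing Theorem~\ref{t1} to the self-map setting. Let $(X,d)$, $|X|\ge 3$, be complete and let $T:X\to X$ contract perimeters of triangles, with no periodic points of prime period $2$. Take $A=B=X$. Then $A$ and $B$ are closed, and $d(A,B)=0$. Hence $d(u,Tx)=d(A,B)$ means exactly $u=Tx$, so $A_0=B_0=X$. This gives $|A_0|\ge 3$ and $T(A_0)\subseteq B_0$ trivially, and a best proximity point of $T$ is precisely a fixed point. (Theorem~\ref{t1} does not require $A$ and $B$ to be disjoint, so this choice is admissible.)

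Next we check the hypotheses of Theorem~\ref{t1} one by one.

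\emph{Perimetric proximal contraction of the first kind.} Suppose $d(u_i,Tx_i)=0$ for $i=1,2,3$, with $x_1,x_2,x_3$ pairwise distinct. Then $u_i=Tx_i$, and the required inequality is literally \eqref{mcpt1}.

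\emph{Condition~$\Lambda$.} A pair $x\neq y$ with $d(x,Ty)=d(y,Tx)=0$ means $Ty=x$ and $Tx=y$ with $x\ne y$. That is, $x$ would be a periodic point of prime period $2$, which is excluded by assumption.

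\emph{Continuity.} This is the one hypothesis not supplied directly, and I expect it to be the main obstacle, since a mapping contracting perimeters of triangles need not be obviously continuous. I would first try to prove continuity from \eqref{mcpt1}. Fix $x$ and let $x_n\to x$ with $x_n\ne x$. Choose a third point $z$ distinct from $x$ and from all the $x_n$; this is possible once one discards finitely many terms, or after handling the case where the $x_n$ run through only finitely many values. Applying \eqref{mcpt1} to the triple $x_n,x,z$ gives
\[
d(Tx_n,Tx)\le \alpha\bigl(d(x_n,x)+d(x,z)+d(z,x_n)\bigr)-d(Tx,Tz)-d(Tz,Tx_n).
\]
This bound does not go to $0$ in general, so continuity can actually fail at isolated points. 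If the estimate cannot be closed, the fallback is to bypass continuity. In the final step of the proof of Theorem~\ref{t1}, continuity is used only to conclude that the limit $u$ of the Picard-type sequence $u_{n+1}=Tu_n$ is a fixed point. Here, since $u_{n+1}=Tu_n$ exactly, I would argue directly: apply \eqref{mcpt1} to the triple $u,u_n,u_{n+1}$, which is pairwise distinct for large $n$ because all $u_n$ are distinct. This yields
\[
d(Tu,u_{n+1})\le \alpha\bigl(d(u,u_n)+d(u_n,u_{n+1})+d(u_{n+1},u)\bigr)\to 0,
\]
so $Tu=u$.

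Finally, the bound of at most two fixed points follows verbatim from the last paragraph of the proof of Theorem~\ref{t1}. This establishes the sufficiency direction of Theorem~\ref{T1}.
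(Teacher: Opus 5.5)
Your argument is correct, and its skeleton matches the paper's. With $A=B=X$ one has $d(A,B)=0$, $u_i=Tx_i$ and $A_0=B_0=X$; Condition~$\Lambda$ becomes the absence of periodic points of prime period $2$, and best proximity points are exactly fixed points.

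The genuine difference is how you handle the continuity hypothesis of Theorem~\ref{t1}. The paper cites Petrov's Proposition~2.3 (every mapping contracting perimeters of triangles is continuous) and then uses Theorem~\ref{t1} as a black box. You instead re-enter the proof of Theorem~\ref{t1} and replace its single use of continuity by applying \eqref{mcpt1} to the triple $u,u_n,u_{n+1}$. This gives $d(Tu,u_{n+1})\le\alpha\bigl(d(u,u_n)+d(u_n,u_{n+1})+d(u_{n+1},u)\bigr)\to 0$. The step is valid because $u_{n+1}=Tu_n$ exactly when $d(A,B)=0$, and the triple is pairwise distinct for large $n$. Your route makes the corollary self-contained; the paper's is shorter but leans on an external result.

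Your side remark that continuity ``can actually fail at isolated points'' is false and should be removed. Any map is continuous at an isolated point. At a non-isolated point, your estimate failed only because you kept the third point $z$ fixed; let it move with $n$ instead. If $x_n\to x$ with $x_n\neq x$, choose $m$ with $0<d(x_m,x)<d(x_n,x)$. Then $x,x_n,x_m$ are pairwise distinct, and \eqref{mcpt1} yields $d(Tx_n,Tx)\le\alpha\bigl(d(x,x_n)+d(x_n,x_m)+d(x_m,x)\bigr)\le 4\alpha\, d(x_n,x)\to 0$. So $T$ is in fact continuous, and you could have applied Theorem~\ref{t1} directly, exactly as the paper does.
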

\begin{proof}
  Indeed, let us set $A=B=X$ in Theorem \ref{t1}. Then it follows from Definition \ref{d2} that $d(A,B)=0$, $u_1=Tx_1$, $u_2=Tx_2$, $u_3=Tx_3$ and, hence, $T$ is a mapping contracting the perimeters of triangles. The completeness of the space $X$ ensures that every Cauchy sequence converges to a limit that belongs to the set $X$. In the same way, the closedness of $A$ and $B$ provided similar property for these subsets. Clearly, $A_0=B_0=X$ and the inclusion $T(A_0)\subseteq B_0$ holds. One can easily see that in the case $A=B=X$, Condition $\Lambda$ is equivalent to the condition that there is no periodic point of prime period $2$ and the existence of a best proximity point is equivalent to the existence of a fixed point for the mapping $T$. It remains only to note that any mapping contracting the perimeters of triangles is continuous (\cite[\, p. 2, Proposition 2.3]{P9}).
\end{proof}
Our next result is for perimetric proximal contraction of the second kind. Before this, we recall the definition of approximatively compact set. 

\begin{definition}(\hspace{-0.065em}\cite[\, p. 1774, Definition~2.1]{B10})
Let $A,B$ be two non-empty subsets of a metric space $(X,d)$. Then
$A$ is said to be approximatively compact with respect to $B$ if every sequence $\{u_n\}$ of $A$ satisfying the condition $d(u,u_n)\to d(u,A)$ for some $u\in B$, has a convergent
subsequence.
\end{definition}
\begin{theorem}\label{t2}
The first conclusion of Theorem \ref{t1} remains valid if $T$ is a perimetric proximal contraction of the second kind instead of  the first kind  with other conditions of Theorem \ref{t1}, and $A$ is approximatively compact with respect to $B$. Further, $T$ has at most two best proximity points when it is injective.
\end{theorem}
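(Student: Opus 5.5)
We follow the proof of Theorem~\ref{t1}, but carry out the contraction on the images $Tu_n$ rather than on the points $u_n$. Starting from $u_0\in A_0$ and using $T(A_0)\subseteq B_0$, we build as before a sequence $\{u_n\}\subseteq A_0$ with $d(u_{n+1},Tu_n)=d(A,B)$ for all $n$. If $Tu_n=Tu_{n+1}$ for some $n$, then $d(u_{n+1},Tu_{n+1})=d(u_{n+1},Tu_n)=d(A,B)$, so $u_{n+1}$ is a best proximity point and we are done.

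Otherwise $Tu_n\neq Tu_{n+1}$ for all $n$. We then need $Tu_n,Tu_{n+1},Tu_{n+2}$ to be pairwise distinct, which requires excluding $Tu_n=Tu_{n+2}$. If $Tu_n=Tu_{n+2}$, then $d(u_{n+3},Tu_n)=d(u_{n+3},Tu_{n+2})=d(A,B)$, and also $d(u_{n+2},Tu_{n+1})=d(A,B)$. This does not directly contradict Condition~$\Lambda$, so here I would first try reducing to $u_{n+1}$ versus $u_{n+2}$ (or $u_{n+3}$).

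\textbf{Main obstacle.} Condition~$\Lambda$ is phrased in terms of points, while the contraction is phrased in terms of images, so this step is the delicate one. If no reduction works, I would note that $u_{n+1}$ and $u_{n+3}$ are both proximal to the same image $Tu_n=Tu_{n+2}$. One could then choose the sequence so that $u_{n+3}=u_{n+1}$; this is permitted because the construction only requires some proximal point. If $u_{n+1}\neq u_{n+2}$, the pair $(u_{n+1},u_{n+2})$ would then satisfy $d(u_{n+1},Tu_{n+2})=d(u_{n+2},Tu_{n+1})=d(A,B)$, violating Condition~$\Lambda$. If instead $u_{n+1}=u_{n+2}$, then $u_{n+1}$ is already a best proximity point.

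Once the three consecutive images are pairwise distinct, apply Definition~\ref{d2} to the triples $(u_{n+1},u_{n+2},u_{n+3})$ and $(u_n,u_{n+1},u_{n+2})$. With
\[
s_n=d(Tu_n,Tu_{n+1})+d(Tu_{n+1},Tu_{n+2})+d(Tu_{n+2},Tu_n),
\]
this gives $s_{n+1}\le\alpha s_n$, hence $d(Tu_{n+1},Tu_{n+2})\le \alpha^n s_1$. Therefore $\{Tu_n\}$ is Cauchy and converges to some $v\in B$, since $B$ is closed.

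Next, use approximative compactness to extract a limit point of $\{u_n\}$. By the triangle inequality,
\[
d(A,B)\le d(v,A)\le d(v,u_{n+1})\le d(v,Tu_n)+d(Tu_n,u_{n+1})\to d(A,B).
\]
So $d(v,u_{n+1})\to d(v,A)$, and since $A$ is approximatively compact with respect to $B$, some subsequence $u_{n_k+1}$ converges to some $u\in A$ ($A$ closed). By continuity of $T$, $Tu_{n_k+1}\to Tu$, and also $Tu_{n_k+1}\to v$, so $Tu=v$. Passing to the limit in $d(u_{n_k+1},Tu_{n_k})=d(A,B)$ gives $d(u,v)=d(A,B)$, that is, $d(u,Tu)=d(A,B)$. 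Thus $u$ is a best proximity point.

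For the bound on the number of best proximity points, suppose $x,y,z$ are three distinct best proximity points. Injectivity of $T$ makes $Tx,Ty,Tz$ pairwise distinct. Applying Definition~\ref{d2} with $u_i=x_i\in\{x,y,z\}$ gives
\[
P(Tx,Ty,Tz)\le\alpha P(Tx,Ty,Tz),
\]
where $P$ denotes the perimeter. This forces $P=0$, contradicting distinctness.
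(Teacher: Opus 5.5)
Your proof is correct and follows the paper's argument step for step: you build the proximal sequence, show $\{Tu_n\}$ is Cauchy via $s_{n+1}\le\alpha s_n$, use approximative compactness to get a convergent subsequence of $\{u_n\}$, identify $Tu=v$ by continuity, and use injectivity for the bound of two. The only wrinkle is your ``main obstacle'': no re-choice of $u_{n+3}$ is needed, because $Tu_n=Tu_{n+2}$ directly gives $d(u_{n+1},Tu_{n+2})=d(u_{n+1},Tu_n)=d(A,B)$, and together with $d(u_{n+2},Tu_{n+1})=d(A,B)$ this violates Condition~$\Lambda$ for the pair $(u_{n+1},u_{n+2})$, exactly as in the paper.
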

\begin{proof}
Following the proof of Theorem \ref{t1}, we construct a sequence $\{u_n\}$ in $A$ such that $u_n\in A_0$, $d(u_{n+1},Tu_n)=d(A,B)$ for all $n\in \mathbb{N}$.  If $u_n=u_{n+1}$ for some $n$, then $u_n$ is a best proximity point of $T$. So, we assume that $u_n \neq u_{n+1}$ for all $n \in \mathbb{N}$, and  we can then show, as before, that any three consecutive terms of $\{u_n\}$ are pairwise distinct. If $Tu_n=Tu_{n+1}$ for some $n$, then we have $d(u_{n+1},Tu_{n+1})=d(u_{n+1},Tu_{n})=d(A,B)$, showing that $u_{n+1}$ is a best proximity point of $T$. We therefore assume that $Tu_n \neq Tu_{n+1}$ for every $n$. Again, if $Tu_n=Tu_{n+2}$ for some $n$, then we have $d(u_{n+1},Tu_{n+2})=d(u_{n+1},Tu_{n})=d(A,B)$ and also, we have $d(u_{n+2},Tu_{n+1})=d(A,B)$, showing that $T$ does not satisfy Condition $\Lambda$ which leads to a contradiction. Hence, any three consecutive terms of $\{Tu_n\}$ are pairwise distinct.

Following the lines of proof of Theorem \ref{t1},  we can show that $\{Tu_n\}$ is a Cauchy sequence in the closed subset $B$  of $X$, and hence there exists $v\in B$ such that $Tu_n\to v$ as $n\to \infty$. Then, we have
\begin{align*}
d(v,A)\leq d(v,u_{n+1}) &\leq d(v,Tu_n)+d(Tu_n,u_{n+1})\\
&=d(v,Tu_n)+d(A,B)\\
&\leq d(v,Tu_n)+d(v,A).
\end{align*}
Using the fact that $\lim_{n\to \infty} d(v,Tu_n)=0$, we obtain  $\lim_{n\to \infty} d(v,u_{n+1})=d(v,A).$ As $A$ is approximatively compact with respect to $B$, $\{u_n\}$ has a convergent subsequence $\{u_{n(k)}\}$ converging to $u\in A$. Then
\begin{equation}\label{e4}
d(u,v)=\lim_{k\to\infty}d(u_{n(k)},Tu_{n(k)-1})=d(A,B),
\end{equation}
 and so $u\in A_0$. Further, since $T$ is continuous, we have $\lim_{k\to\infty}Tu_{n(k)}=Tu$, and so $Tu=v$. Thus from \eqref{e4}, we get $d(u,Tu)=d(A,B)$, showing that $u$ is a best proximity point of $T$.
 
 Suppose that $T$ is injective. We show that $T$ has at most two best proximity points. Assume, if possible, that $T$ has three distinct best proximity points $x, y, z \in A$. By injectivity of $T$, the points $Tx$, $Ty$ and $Tz$ are pairwise distinct. Then from Definition \ref{d2}, we get
 \begin{align*}
d(Tx,Ty)+d(Ty,Tz)+d(Tz,Tx)&\leq \alpha\{d(Tx,Ty)+d(Ty,Tz)+d(Tz,Tx)\}\\
&< d(Tx,Ty)+d(Ty,Tz)+d(Tz,Tx),
\end{align*}
resulting in a contradiction. Hence, $T$ has at most two best proximity points.
\end{proof}
\begin{corollary}
Let $(X, d)$, $|X| \geqslant 3$, be a complete metric
space and let $T\colon X \to X$ be a continuous mapping such that the inequality
$$
d(T^2x,T^2y)+d(T^2y,T^2z)+d(T^2z,T^2x)\leqslant \alpha 
\{d(Tx,Ty)+d(Ty,Tz)+d(Tz,Tx)\}
$$
holds for all  $x,y,z \in X$ with $Tx,Ty,Tz$ being pairwise distinct and some $0\leqslant \alpha<1$. Suppose also that $T$ does not possess any periodic points of prime period $2$. Then
$T$ has a fixed point in $X$. The number of fixed points is at most two.
\end{corollary}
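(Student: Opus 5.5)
The plan is to deduce the corollary from Theorem~\ref{t2}, taking $A=B=X$, so that $d(A,B)=0$ and best proximity points are exactly fixed points. With this choice we have $A_0=B_0=X$, the inclusion $T(A_0)\subseteq B_0$ is trivial, and $A$, $B$ are closed in the complete space $X$. Approximative compactness of $X$ with respect to $X$ is easy: if $d(u,u_n)\to d(u,X)=0$, then $u_n\to u$, so the sequence itself converges. Continuity of $T$ is assumed, and $|A_0|=|X|\geq 3$.

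Next I check that $T$ is a perimetric proximal contraction of the second kind. Take $u_i,x_i$ with $Tx_1,Tx_2,Tx_3$ pairwise distinct and $d(u_i,Tx_i)=0$. Then $u_i=Tx_i$ and $Tu_i=T^2x_i$. The required inequality of Definition~\ref{d2} therefore reads
\[
d(T^2x_1,T^2x_2)+d(T^2x_2,T^2x_3)+d(T^2x_3,T^2x_1)\leq \alpha\{d(Tx_1,Tx_2)+d(Tx_2,Tx_3)+d(Tx_3,Tx_1)\},
\]
which is exactly the hypothesis. Condition~$\Lambda$ with $d(A,B)=0$ says there are no $x\neq y$ with $x=Ty$ and $y=Tx$. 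This is precisely the absence of periodic points of prime period $2$, as already noted in the proof of Corollary~\ref{cor1}. So Theorem~\ref{t2} yields a best proximity point, i.e.\ a fixed point.

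The main obstacle is the bound on the number of fixed points, because Theorem~\ref{t2} gives it only for injective $T$, and $T$ need not be injective. I will argue directly. Suppose $x,y,z$ are three pairwise distinct fixed points. Then $Tx=x$, $Ty=y$, $Tz=z$ are pairwise distinct, so the hypothesis applies. Since $T^2x=x$, $T^2y=y$, $T^2z=z$, it gives
\[
p\leq \alpha p, \qquad p=d(x,y)+d(y,z)+d(z,x)>0,
\]
which is a contradiction. The point is that on the set of fixed points $T$ is automatically injective, so the argument at the end of the proof of Theorem~\ref{t2} goes through without assuming injectivity globally.
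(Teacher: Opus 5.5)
Your proposal is correct and follows essentially the paper's route, which is to specialize Theorem~\ref{t2} to $A=B=X$ exactly as in the proof of Corollary~\ref{cor1}. Your direct argument for the bound on the number of fixed points is a sensible refinement that the paper's one-line proof glosses over: Theorem~\ref{t2} gives that bound only for injective $T$, but, as you note, distinct fixed points automatically have distinct images, so the hypothesis applies to them directly.
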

\begin{proof}
The proof is similar to the proof of Corollary~\ref{cor1}. 
\end{proof}

\begin{example}\label{ex1}
Consider a metric space $(\mathbb R^2,d)$, where $d$ is the Euclidean metric on $\mathbb R^2$.
Let $A=\{p,q,r,s\}$, $B=\{a,b,c\}$,
where
$a=(0,0)$,
$b=(0,1)$,
$c=(0,2)$,
$p=(1,2)$,
$q=(1,1)$,
$r=(1,0)$,
$s=(2,0)$, and let
$$
T(p)=c, \quad T(q)= b, \quad T(r)=b, \quad T(s)=a.
$$ 

It is clear that $d(A,B)=1=d(a,r)=d(b,q)=d(c,p)$, $A_0=\{p,q,r\}$, $B_0=\{a,b,c\}$.

Let us show that $T$ is a perimetric proximal contraction of the first kind. Let us identify all pairs $(u, x) \in A \times A$ that satisfy 
the condition $d(u, Tx) = d(A, B) = 1$.

Let us test each $x \in A$ to find corresponding $u \in A$:
\begin{itemize}
    \item If $x = p$, $Tx = c$. We need $d(u, c) = 1$. The only solution in $A$ is $u = p$, which gives the pair $(p,p)$.
    \item If $x = q$, $Tx = b$. We need $d(u, b) = 1$. The only solution in $A$ is $u = q $, which gives the pair $(q,q)$.  
    \item If $x = r$, $Tx = b$. We need $d(u, b) = 1$. The only solution in $A$ is $u = q$, which gives the pair $(q, r)$.
    \item If $x = s$, $Tx = a$. We need $d(u, a) = 1$. The only solution in $A$ is $u = r$, which gives the pair $(r, s)$.
\end{itemize}

The definition requires the inequality to hold for $x_1, x_2, x_3$ that are pairwise distinct. There are only four possible pairs of sets of points $\{u_1, u_2, u_3\}$ and $\{x_1, x_2, x_3\}$ (up to permutation) that satisfy the necessary condition. Specifically:
$$
 \{u_1, u_2, u_3\} = \{p, q, r\} \quad \text{with} \quad \{x_1, x_2, x_3\} = \{p, q, s\},
$$
$$
 \{u_1, u_2, u_3\} = \{q, q, r\} \quad \text{with} \quad \{x_1, x_2, x_3\} = \{q, r, s\},
$$
$$
 \{u_1, u_2, u_3\} = \{p, q, q\} \quad \text{with} \quad \{x_1, x_2, x_3\} = \{p, q, r\},
$$
and
$$
\{u_1, u_2, u_3\} = \{p, q, r\} \quad \text{with} \quad \{x_1, x_2, x_3\} = \{p, r, s\}.
$$
For all $x,y,z\in X$ set $P(x,y,z)=d(x,y)+d(y,z)+d(z,x)$.

Case I:
$$
P(p,q,r) = 1+1+2=4, \quad
P(p,q,s) =1+\sqrt{2}+\sqrt{5}.
$$

Case II:
$$
P(q,q,r) = 0+1+1=2, \quad
P(q,r,s) =1+1+\sqrt{2}=2+\sqrt{2}.
$$

Case III:
$$
P(p,q,q) = 1+0+1=2, \quad
P(p,q,r) =1+1+2=4.
$$

Case IV:
$$
P(p,q,r)=1+1+2=4,\quad
P(p,r,s) =2+1+\sqrt{5}=3+\sqrt{5}.
$$
Clearly, in each case the inequality in Definition \ref{d1}
 holds with some $\alpha<1$.

Thus, $T$ is a perimetric proximal contraction of the first kind.
Additionally, from the form of $T$, it is evident that there are no distinct points $x, y \in A$ for which the equalities $d(x, Ty) = d(A,B) = d(y, Tx)$ hold, which ensures that $T$ satisfies Condition $\Lambda$. 
Also, it is easy to see that $T(A_0)=\{b,c\}\subseteq B_0$.
Accordingly, all the assumptions of Theorem \ref{t1} hold, so by virtue of Theorem \ref{t1}, $T$ has best proximity points, limited to a maximum of two. 
We remark that $T$ has two best proximity points $p$ and $q$.
\end{example}

\begin{example}\label{ex2}
Consider the complete metric space $(\mathbb{R},d)$, where $d$ is the Euclidean metric on $\mathbb{R}$. Let us choose $A=\{x\in \mathbb{N}: x=7+4n, n=0,1,2,3,\cdots\}$ and $B=\{x\in \mathbb{N}:x=2n,n=1,2,3,\cdots\}$. Then $d(A,B)=1$ and $A_0=A$ and $B_0=\{6,8,10,\cdots\}$. Let us define $T:A\to B$ by 
$$Tx=\begin{cases}
6,~~~~\mbox{if}~~~~x=7\\
12,~~~~\mbox{if}~~~~x=11\\
2n+2,~~~~\mbox{if}~~~~x=7+4n,n\geq 2.\\
\end{cases}$$
Let $u_1,u_2,u_3,x_1,x_2,x_3\in A$ where  $x_1,x_2,x_3$ are pairwise distinct with $d(u_1,Tx_1)= d(A,B)=
d(u_2,Tx_2)=d(u_3,Tx_3)$. Then, we have the following possibilities:

Case I: $u_1=x_1=7$; $u_2=x_2=11$; and  $x_3\neq 7,11$. 

Then, $x_3=7+4x_3'$  where $x_3'\in \mathbb{N}$ with $x_3'\geq 2$. Then, the relation $d(u_3,Tx_3)=d(A,B)$ yields either $u_3=2x_3'+1$ or $u_3=2x_3'+3$. Then,
\begin{align*}
d(u_1,u_2)+d(u_2,u_3)+d(u_3,u_1)&=4+|11-u_3|+|u_3-7|\\
&= 4+|u_3-11|+|u_3-7|\\
& \leq 4+\max\big\{ |2u_3-18|,4  \big\}\\
& =\begin{cases}
4+\max\big\{ |4x_3'-16|,4  \big\}~~~\mbox{if}~~~u_3=2x_3'+1\\
4+\max\big\{ |4x_3'-12|,4  \big\}~~~\mbox{if}~~~u_3=2x_3'+3,
\end{cases}
\end{align*}
and 
\begin{align*}
d(x_1,x_2)+d(x_2,x_3)+d(x_3,x_1)&=4+|11-x_3|+|x_3-7|\\
&=4+|4-4x_3'|+4x_3'\\
&=8 x_3'.
\end{align*}
Hence, the inequality 
\begin{equation}\label{eqtr}
d(u_1,u_2)+d(u_2,u_3)+d(u_3,u_1) \leq \alpha (d(x_1,x_2)+d(x_2,x_3)+d(x_3,x_1) )
\end{equation}
holds for $\alpha=\frac{2}{3}$.

Case II: $u_1=x_1=7$;  and  $x_2,x_3\neq 7,11$. 

Then, $x_2=7+4x_2', x_3=7+4x_3'$  where $x_2',x_3'\in \mathbb{N}$ with $x_2',x_3'\geq 2$. Then  the relations $d(u_2,Tx_2)=d(A,B)$ and $d(u_3,Tx_3)=d(A,B)$ yield either $u_2=2x_2'+1$ or $u_2=2x_2'+3$; and either $u_3=2x_3'+1$ or $u_3=2x_3'+3$. In any possibilities, we always have $u_2,u_3\geq 7$. Without loss of generality, assume that  $x_3>x_2$, then $u_3\geq u_2 \geq u_1$ and $x_3'>x_2'\geq 2$. Hence, 
\begin{align*}
\frac{d(u_1,u_2)+d(u_2,u_3)+d(u_3,u_1)}
{d(x_1,x_2)+d(x_2,x_3)+d(x_3,x_1)}=\frac{2(u_3-7)}{2(x_3-7)}
=\frac{u_3-7}{x_3-7}
=\frac{u_3-7}{4x_3'}
\end{align*}
$$
=\begin{cases}
\frac{2x_3'-6}{4x_3'}~~~\mbox{if}~~~u_3=2x_3'+1\\
\frac{2x_3'-4}{4x_3'}~~~\mbox{if}~~~u_3=2x_3'+3,
\end{cases}
$$
$$
\leq \frac{1}{2}-\frac{1}{x_3'}.
$$
Hence, inequality~(\ref{eqtr}) holds for $\alpha=\frac{2}{3}$.
 
Case III: $u_1=x_1=11$;  and $x_2,x_3\neq 7,11$. 
 
Then similar to Case II, we have $x_2=7+4x_2', x_3=7+4x_3'$  where $x_2',x_3'\in \mathbb{N}$ with $x_2',x_3'\geq 2$, and these together with the relations $d(u_2,Tx_2)=d(A,B)$ and $d(u_3,Tx_3)=d(A,B)$ imply either $u_2=2x_2'+1$ or $u_2=2x_2'+3$; and either $u_3=2x_3'+1$ or $u_3=2x_3'+3$.
Without loss of generality, assume that  $x_3>x_2$, then $u_3\geq u_2$ and $x_3'>x_2'\geq 2$. Hence,
\begin{align*}
d(u_1,u_2)+d(u_2,u_3)+d(u_3,u_1)&= |u_2-11| + |u_2-u_3| + |11-u_3|\\
&= |u_2-u_3| +|11-u_2|+|11-u_3|\\
&\leq u_3-u_2 +\max\big\{ |u_2+u_3-22|,u_3-u_2 \big\}
\end{align*}
and, similarly,
\begin{align*}
d(x_1,x_2)+d(x_2,x_3)+d(x_3,x_1)
=2 |x_3-x_1|=4x_3'-4.
\end{align*}
Analogously we can show inequality~(\ref{eqtr}) for $\alpha=\frac{2}{3}$.
 
Case IV: $u_1,u_2,u_3\neq 7,11$ and  $x_1, x_2,x_3\neq 7,11$. 

Here as in the above cases, we can show that inequality~(\ref{eqtr}) holds for $\alpha=\frac{2}{3}$.

Thus, $T$ is a perimetric proximal contraction of  the first kind. The equalities $d(x,Ty)=1=d(y,Tx)$ for distinct $x,y\in A$ would require $x$ and $y$ to be negative integers or fractions - an impossibility here. Hence, by contraposition, there are no distinct points $x,y \in A$ with $d(x,Ty) = d(A,B) = d(y,Tx)$, i.e., $T$  satisfies Condition $\Lambda$. Further, it is easy to verify that $T(A_0)\subseteq B_0$. 

Consequently, Theorem \ref{t1} applies, ensuring $T$ has best proximity points and their number does not exceed two. Indeed $7$ and $11$ are the best proximity points of $T$.
\end{example}

\begin{example}\label{ex3}
Consider the complete metric space $(\mathbb{R}^2, d)$, equipped with the  Euclidean  metric $d$, defined by $d(x,y)=\{|x_1-x_2|^2+|y_1-y_2|^2\}^{\frac{1}{2}}$, $x=(x_1,y_1),y=(x_2,y_2)\in \mathbb{R}^2$. We choose $A=\{(x,1)\in \mathbb{R}^2: -1\leq x \leq 1\}$
and $B=\{(x,0)\in \mathbb{R}^2: -1\leq x \leq 1\}$. Then $d(A,B)=1$, $A_0=A$ and $B_0=B$. We define a mapping $T:A\to B$ by $T(x,y)=\left( \frac{x}{10},0\right)$ for all $(x,y)\in A$. 

Then, for $u=(u_1,1), x=(x_1,1) \in A$, the condition $d(u,Tx)=d(A,B)=1$ yields $u_1=\frac{x_1}{10}$. With this in hand, a simple argument establishes that $T$ is a perimetric proximal contraction of  the second kind, with $\alpha = \frac{2}{3}$. Further, taking $x=(x_1,1), y=(x_2,1)\in A$ with $x\neq y$, the equalities $d(x,Ty)=d(A,B)$ and $d(y,Tx)=d(A,B)$ imply $x_1=\frac{x_2}{10}$ and $x_2=\frac{x_1}{10}$, which is untenable here. Thus, there do not exist distinct points $x, y \in A$ satisfying $d(x,Ty) = d(A,B) = d(y,Tx)$, ensuring that $T$ satisfies Condition $\Lambda$. Moreover, it is evident that $A$ is approximately compact with respect to $B$. In addition, one readily observes that $T(A_0)\subseteq B_0$, and $T$ is injective.  Invoking Theorem \ref{t2}, we conclude that $T$ admits a best proximity point, with a count of at most two; in this case, $T$ has exactly one best proximity point, viz., $(0,1)$.
\end{example}

\end{document}